      \newtheorem{assumption}{Assumption}
\newtheorem{theorem}{Theorem}
\newtheorem{corollary}{Corollary}
\newtheorem{remark}{Remark}
\newtheorem{problem}{Problem}
\begin{document}
\date{}

\title{Guaranteed simulation error bounds\\for linear time invariant systems identified from data}

\author{Marco~Lauricella  and 
	Lorenzo~Fagiano
	\thanks{The authors are with the Dipartimento di Elettronica, Informazione e Bioingegneria, Politecnico di Milano, Piazza Leonardo da Vinci
32, 20133 Milano, Italy. E-mail addresses: \{marco.lauricella $|$ lorenzo.fagiano \}@polimi.it. Corresponding author: Marco Lauricella}}

\maketitle

\begin{abstract}
\centering This is a technical report that extends and clarifies the results presented in \cite{LF_CDC18}.
\end{abstract}

\section{Problem formulation}
\label{s:probl_form}
Consider a discrete time, asymptotically stable, strictly proper linear time invariant system, with input $u(k) \in \mathbb{R}$ and output $z(k) \in \mathbb{R}$, where $k \in \mathbb{Z}$ is the discrete time variable. The state-space representation of the system dynamics is given by:
\begin{equation}
\label{eq:sys_ss_def}
\begin{aligned}
x(k+1)&=A x(k)+Bu(k) \\
z(k)&=Cx(k)
\end{aligned}
\end{equation}
where $x \in \mathbb{R}^n$ is the system state. The output measurement $y(k)$ is affected by an additive disturbance $d(k)$:
\begin{equation} 
\label{eq:output_def}
y(k)=z(k)+d(k),
\end{equation}
\begin{assumption}
\label{as:1} (Disturbance and input bounds)
\begin{itemize} 
\item $|d(k)| \leq \bar{d}_0, \; \forall k \in \mathbb{Z}$.
\item $u(k) \in \mathbb{U} \subset \mathbb{R}, \; \forall k \in \mathbb{Z}$, $\mathbb{U}$ compact.\hfill \QED
\end{itemize}
\end{assumption}
\begin{assumption}\label{A:model_reach_obsv} (Observability and reachability)  
	The system at hand is completely observable and reachable. \hfill \QED
\end{assumption}
\vspace{0.2cm}
Assumption \ref{A:model_reach_obsv} is made for simplicity, as it can be relaxed by considering only  the observable and controllable sub-space of the system state. 
For a given value of $k$ and of prediction horizon $p\in\mathbb{N}$, we have $z(k+p)=CA^p x(k) + C\sum\limits_{i=1}^{p} g_i u(k+p-i)$, where $g_i=A^{i-1} B$. Under Assumption \ref{A:model_reach_obsv}, this can be equivalently written as:
\begin{equation}
\label{eq:real_sys_ARX_model}
z(k+p)=\underbrace{[Z_n^T(k) \; U_{p,n}^T(k)]}\limits_{\psi_p(k)^T}
\underbrace{\left[\begin{array}{c} \theta_{p,z}^0\\ \theta_{p,u}^0 \end{array}\right]}\limits_{\theta_p^0}=\psi_p(k)^T\theta_p^0,
\end{equation}
where $^T$ denotes the matrix transpose operation, and:
\begin{equation}\label{eq:regressor_true}
Z_n(k) \doteq \begin{bmatrix} z(k) \\ z(k-1) \\ \vdots \\ z(k-n+1) \end{bmatrix}, \; U_{p,n}(k) \doteq \begin{bmatrix} u(k+p-1) \\ u(k+p-2) \\ \vdots \\ u(k-n+1) \end{bmatrix}.
\end{equation}
It is well-known that, for an asymptotically stable system, the parameters $\theta_p^{0,(i)}$ are subject to the following bounds:
\begin{equation}
\label{eq:real_decay_rate}
\begin{array}{rcl}
|\theta_{p,u}^{0,(i)}| &\leq& L_u\rho^i,\;i=1,\ldots,p+n-1\\
|\theta_{p,z}^{0,(i)}| &\leq& L_z\rho^{p+i},\;i=1,\ldots,n-1\\
\end{array},
\end{equation}
where $^{(i)}$ denotes the element in the $i$-th position of a vector. In \eqref{eq:real_decay_rate}, the decay rate $\rho$  and the constants $L_u$ and $L_z$ depend on the system matrices in \eqref{eq:sys_ss_def}; in particular, $\rho$ is dictated by the magnitude of the system's dominant poles. Finally, we can write the one-step-ahead dynamics of the true system as (considering $p=1$ in \eqref{eq:real_sys_ARX_model}):
\begin{equation}
\label{eq:real_sys_ARX_1s_model}
z(k+1)=\psi_1(k)^T \theta_1^0,
\end{equation}
which corresponds to a standard auto-regressive description with exogenous input (ARX). For any $p>1$, the entries of the parameter vector $\theta_p^0$ are polynomial functions of the entries of $\theta_1^0$, readily obtained by recursion of \eqref{eq:real_sys_ARX_1s_model}. We indicate this polynomial dependency in compact form as:
\begin{equation}
\label{eq:polynomials}
\theta_p^0=h(\theta_1^0,p,n).
\end{equation}

As motivated in \cite{LF_CDC18}, we consider the problem of identifying the parameters of a one-step-ahead model of \eqref{eq:real_sys_ARX_1s_model} from data. To this end, we introduce the model regressor $\varphi_p(k) \in \mathbb{R}^{2o+p-1}$, where  $o \in \mathbb{N}$ is the chosen model order:
\begin{equation} \label{eq:regr_def}
\varphi_p(k) \doteq [Y_o^T(k) \; U_{p,o}^T(k)]^T,
\end{equation}
where $Y_o(k) \doteq [ y(k) \, y(k-1) \ldots y(k-o+1)]^T\in\mathbb{R}^{o}$ and $U_{p,o}$ is defined as in \eqref{eq:regressor_true}. Then, we consider the following ARX model structure for our one-step-ahead model:
\begin{equation} \label{eq:1s_pred_model_def}
\hat{z}(k+1)=\varphi_1(k)^T \theta_1,
\end{equation}
where $\hat{z}(k+1)\approx z(k+1)$ is the predicted one-step-ahead output, and $\theta_1 \in \mathbb{R}^{2o}$ is the model parameter vector to be estimated from data.  Simulating (i.e. iterating) the model \eqref{eq:1s_pred_model_def} defines the following multi-step predictors for each $p>1,\,p\in\mathbb{N}$:
\begin{equation} \label{eq:pred_model_def}
\hat{z}(k+p)=\varphi_p(k)^T \begin{bmatrix} \hat{\theta}_{p,y} \\ \hat{\theta}_{p,u} \end{bmatrix} =\varphi_p(k)^T \hat{\theta}_p,
\end{equation}
where $\hat{z}(k+p)$ is the predicted (i.e. simulated) $p$-step ahead future output, and $\hat{\theta}_p = h(\hat{\theta}_1,p,o) \in \mathbb{R}^{2o+p-1}$ is the corresponding parameter vector, whose entries are polynomial functions of the entries of $\hat{\theta}_1$. 

Besides the possible  order mismatch (i.e. $o\neq n$), the main difference between the model \eqref{eq:1s_pred_model_def} and the true system \eqref{eq:real_sys_ARX_1s_model} is that the former employs  disturbance-affected measurements $y(k)$ of the output in its regressor, instead of the true output values $z(k)$. To study the effects of this difference, let us define the vector $\psi_{p,o}(k)\doteq[Z_o^T(k) \; U_{p,o}^T(k)]^T$, where $Z_o$ is obtained as in \eqref{eq:regressor_true}. Assumption \ref{as:1}, along with the asymptotic stability of the  system, implies that the regressors $\psi_{p,o}(k)$ belong to a compact set $\Psi_{p,o}$:
\begin{equation}\label{eq:set_regr_true_o}
\psi_{p,o}(k) \in \Psi_{p,o} \subset \mathbb{R}^{2o+p-1}, \, \Psi_{p,o} \text{ compact}, \, \forall p \in \mathbb{N}, \, \forall k \in \mathbb{Z}.
\end{equation}
Consequently, $\varphi_p(k)$ belongs to a compact set $\Phi_p$ as well:
\begin{equation}\label{eq:set_regr_noise_o}
\varphi_p(k) \in \Phi_p= \Psi_{p,o}\oplus\mathbb{D}_p,\; \forall p \in \mathbb{N}, \; \forall k \in \mathbb{Z},
\end{equation}
where $F\oplus M=\{f+m:f\in F,\,m\in M\}$ is the Minkowski sum of two given sets $F,M$, and
\begin{equation}\label{eq:set_noise:o}
\mathbb{D}_p\doteq\{[d^{(1)},\ldots,d^{(o)},0,\ldots,0]^T:|d^{(i)}|\leq\bar{d}_0\}\subset\mathbb{R}^{2o+p-1}
\end{equation} 
is the set of all possible disturbance realizations that can affect the system output values stacked inside the regressor $\varphi_p$. In practical applications, the sets $\Psi_{p,o}$ and $\Phi_p$ depend on the input/output trajectories of the system, and they are typically not available explicitly. However, for the sake of parameter identification we assume to have a finite number $N$ of measured pairs $(\tilde{\varphi}_p(i),\tilde{y}_p(i))$, where $\tilde{\cdot}$ denotes a specific sample and $\tilde{y}_p(i)\doteq\tilde{y}(i+p)$. These sampled data define the set:
\begin{equation}
\label{eq:sample_var_set}
\tilde{\mathscr{V}}_p^{N} \doteq \left\{ \tilde{v}_p(i)= \begin{bmatrix} \tilde{\varphi}_p(i) \\ \tilde{y}_p(i) \end{bmatrix}, \; i=1,\hdots,N \right\} \subset \mathbb{R}^{2o+p},
\end{equation}
The continuous counterpart of $\tilde{\mathscr{V}}_p^{N}$ is:
\begin{equation}
\label{eq:all_var_set}
\mathscr{V}_p \doteq \left\{ v_p= \begin{bmatrix} \varphi_p \\ y_p \end{bmatrix}: y_p \in Y_p(\varphi_p), \; \forall \varphi_p \in \Phi_p \right\} \subset \mathbb{R}^{2o+p},
\end{equation}
where $Y_p(\varphi_p) \subset \mathbb{R}$ is the compact set of all possible measured  output values corresponding to every value of $\varphi_p \in \Phi_p$ and every disturbance realization $d:|d|\leq\bar{d}_0$. 
\begin{assumption} (Informative content of data)
\label{as:set_distance} 
For any $\beta > 0$, there exists a value of $N< \infty$ such that:
$$ d_2 \left( \mathscr{V}_p, \tilde{\mathscr{V}}_p^{N} \right) \leq \beta,$$
where $ d_2 \left( \mathscr{V}_p, \tilde{\mathscr{V}}_p^{N} \right) \doteq \underset{v_1 \in \mathscr{V}_p}{\textrm{max}} \underset{v_2 \in \tilde{\mathscr{V}}_p^{N}}{\textrm{min}} \left\Vert v_2-v_1 \right\Vert_2 $ represents the distance between the two sets. \hfill \QED
\end{assumption}
The meaning of Assumption \ref{as:set_distance} is that, by adding more points to the measured data-set, the set of all the trajectories of interest is densely covered, leading to $ \underset{N \to \infty}{\textrm{lim}} d_2 \left( \mathscr{V}_p, \tilde{\mathscr{V}}_p^{N} \right)=0$. This corresponds to a persistence of excitation condition, plus a bound-exploring property of the variable $d(k)$.

We can now state the problem addressed in this paper.
\begin{problem}
\label{p:probl_statement}
Under Assumptions \ref{as:1}-\ref{as:set_distance}, use the available data \eqref{eq:sample_var_set} to:
\begin{itemize}
	\item[a)] estimate the disturbance bound $\bar{d}_0$, the system order $n$, and the decay rate $\rho$;
	\item[b)] identify the parameters of the model \eqref{eq:1s_pred_model_def} according to a suitable optimality criterion, together with associated guaranteed bounds on the simulation (i.e. multi-step prediction) error $|z(k+p)-\hat{z}(k+p)|,\,p=1,\ldots,\overline{p}$, where $\bar{p}<\infty$ is a maximum simulation horizon of interest. \hfill \QED
\end{itemize}
\end{problem}
We provide next an approach, based on multi-step Set Membership (SM) identification, to address point a) of Problem \ref{p:probl_statement}, and to obtain worst-case bounds useful to solve point b) as well.
\section{Multi-step Set Membership identification of linear systems}
\label{s:MS_models_def}
\subsection{Preliminary results}\label{SS:SM_preliminary}
We start by recalling results derived in \cite{TFFS}, which we employ and complement with further ones in the next sections. Consider a generic $p\in\mathbb{N}$ and a generic parameter vector $\theta_p$ defining a multi-step predictor $\varphi_p(k)^T\theta_p\approx z(k+p)$ (not necessarily computed by iterating a one-step-ahead model). By denoting the error between the system output and such an estimate as $\varepsilon_p(\theta_p,\varphi_p(k))=z(k+p)-\varphi_p(k)^T \theta_p$,
under Assumption \ref{as:1} it follows that:
\begin{equation}
\label{eq:total_est_err}
\left\vert y(k+p) - \varphi_p(k)^T \theta_p \right\vert \leq \bar{\varepsilon}_p(\theta_p)+\bar{d},
\end{equation}
where $\bar{\varepsilon}_p(\theta_p)$ represents the global error bound produced by  $\theta_p$ (termed ``global'' since it holds for all possible regressor values in the set $\Phi_p$), and $\bar{d} \geq 0$ is an estimate of the true disturbance bound $\bar{d}_0$. $\bar{\varepsilon}_p(\theta_p)$ is given by:
\begin{equation}
\label{eq:epsilon_def}
\begin{aligned}
\bar{\varepsilon}_p(\theta_p) = &\min_{\varepsilon \in \mathbb{R}} \; \varepsilon \;\;\text{subject to}\\
&\, \left\vert y_p - \varphi_p^T \theta_p \right\vert \leq \varepsilon + \bar{d}, \; \forall (\varphi_p,y_p): \begin{bmatrix} \varphi_p \\ y_p \end{bmatrix} \in \mathscr{V}_p
\end{aligned}
\end{equation}
This bound cannot be computed exactly in practice, with a finite set of data points. In \cite{TFFS}, a method for estimating $\bar{\varepsilon}_p(\theta_p)$ is proposed, along with the proof that this estimate, denoted with $\underline{\lambda}_p$, converges to $\bar{\varepsilon}_p$ from below under suitable assumptions. $\underline{\lambda}_p$ is obtained by solving the following linear program (LP):
\begin{equation}
\label{eq:lambda_est_def}
\begin{aligned}
\underline{\lambda}_p = &\min_{\theta_p,\lambda\geq 0} \; \lambda \; \; \text{subject to} \\
&\, \left\vert \tilde{y}_p - \tilde{\varphi}_p^T \theta_p \right\vert \leq \lambda + \bar{d}, \; \forall (\tilde{\varphi}_p,\tilde{y}_p): \begin{bmatrix} \tilde{\varphi}_p \\ \tilde{y}_p \end{bmatrix} \in \tilde{\mathscr{V}}_p^{N}
\end{aligned}
\end{equation}
Then, the estimate is inflated to account for the uncertainty due to the use of a finite number of measurements, leading to:
\begin{equation}
\label{eq:epsilon_hat_def}
\hat{\bar{\varepsilon}}_p = \alpha \underline{\lambda}_p, \; \alpha > 1.
\end{equation}
We can now recall the Feasible Parameter Set (FPS) $\Theta_p$, which is the tightest set of parameter values that are consistent with the information coming from data and disturbance bound estimate:
\begin{equation}
\label{eq:FPS_orig_def}
\Theta_p= \left\{ \theta_p : | \tilde{y}_p - \tilde{\varphi}_p^T \theta_p | \leq \hat{\bar{\varepsilon}}_p + \bar{d}, \; \forall (\tilde{\varphi}_p, \tilde{y}_p): \begin{bmatrix} \tilde{\varphi}_p \\ \tilde{y}_p \end{bmatrix} \in \tilde{\mathscr{V}}_p^{N} \right\} 
\end{equation}
If the FPS is bounded, it results in a polytope with at most $N$ faces (if it is unbounded, then the employed data are not informative enough and new data should be collected). Now, the FPS can be used to derive a global bound on the prediction error produced by a given value of $\theta_p$, indicated with $\tau_p(\theta_p)$:
\begin{equation}
\label{eq:tau_real_orig_def}
\begin{array}{l}
|z(k+p)-\hat{z}(k+p)|\leq\tau_p(\theta_p)\\
\tau_p(\theta_p) = \max\limits_{\varphi_p \in \Phi_p} \; \max\limits_{\theta \in \Theta_p} \; | \varphi_p^T (\theta - \theta_p)  | + \hat{\bar{\varepsilon}}_p.
\end{array}
\end{equation}
Similarly to $\bar{\varepsilon}_p$, also $\tau_p(\theta_p)$ cannot be computed exactly with a finite data set. An estimate is given by:
\begin{equation}
\label{eq:tau_est_orig_def}
\underline{\tau}_p(\theta_p) = \max_{\tilde{\varphi}_p \in \tilde{\mathscr{V}}_p^{N}} \; \max_{\theta \in \Theta_p} \; | \varphi_p^T (\theta - \theta_p)  | + \hat{\bar{\varepsilon}}_p.
\end{equation}
$\underline{\tau}_p(\theta_p)$ converges to its counterpart $\tau_p(\theta_p)$ from below as $N$ increases under Assumption \ref{as:set_distance}, see \cite{TFFS}. In practical applications, we inflate $\underline{\tau}_p(\theta_p)$ as well, in order to compensate for the uncertainty deriving from the usage of a finite data-set:
\begin{equation}
\label{eq:tau_hat_orig_def}
\hat{\tau}_p(\theta_p)=\gamma \left( \max_{\tilde{\varphi}_p \in \tilde{\mathscr{V}}_p^{N}} \; \max_{\theta \in \Theta_p} \; \left\vert \varphi_p^T (\theta - \theta_p)  \right\vert \right) + \hat{\bar{\varepsilon}}_p, \; \gamma > 1.
\end{equation}
\begin{assumption} (Estimated error bounds)
\label{as:eps_tau_overbound}
The estimated values of $\hat{\bar{\varepsilon}}_p$ and $\hat{\tau}_p(\theta_p)$ are larger than the corresponding true bounds $\bar{\varepsilon}_p$ and $\tau_p(\theta_p)$, respectively.  \hfill \QED
\end{assumption}
\begin{remark} (On the choice of $\alpha$ and $\gamma$)
\label{rm:choice_alpha_gamma}
The parameter $\alpha$ can be chosen sufficiently close to 1 if $N$ is big enough to `guarantee' that the experiment performed on the system is informative enough. A value of $\alpha$ that is too high will lead to a conservative error bound and larger FPSs, reducing the performance of the estimate. Similarly, with a large enough value of $N$, $\gamma$ can be chosen really close to 1 and still satisfy Assumption \ref{as:eps_tau_overbound}. An excessive value of $\gamma$ will produce a conservative error bound $\tau$, which could be far from the real performance achieved by the identified model. In a sense, $\alpha$ and $\gamma$ express how much one is confident on the informative content of the identification experiment. \hfill \QED
\end{remark} 
\subsection{New results on the estimated multi-step error bounds}
\label{s:exp_decay_estim}
We present two results showing additional properties of the quantity $\underline{\lambda}_p$ \eqref{eq:lambda_est_def}. These provide a theoretical justification to the estimation procedures for the disturbance bound $\bar{d}_0$, system order $n$, and decay rate $\rho$, which we propose in  Section \ref{ss:dim_lambda_dbar}.

Let us define:
\begin{equation}
\label{eq:lambda_per_proof_first_def}
\lambda_p \doteq \min_{\theta_p \in \Omega} \; \max_{ \left[ \begin{smallmatrix} \varphi_p \\ y_p \end{smallmatrix} \right] \in \mathscr{V}_p} \left( \left\vert y_p - \varphi_p^T \theta_p \right\vert - \bar{d} \right).
\end{equation}
In \eqref{eq:lambda_per_proof_first_def}, $\Omega \subset \mathbb{R}^{2o+p-1}$ represents a compact approximation of the real set $\mathbb{R}^{2o+p-1}$: it can be chosen e.g. by considering box constraints of $\pm 10^{15}$ on each element of the parameter vector. This is a technical assumption that allows us to use the maximum and minimum operators, instead of supremum and infimum. 
\begin{assumption} (Predictor order)
\label{as:model_order}
The estimated order $o$ is chosen such that $o \geq n$. \hfill \QED
\end{assumption} 
As indicated in Section \ref{ss:dim_lambda_dbar}, this assumption can be satisfied by initially over-estimating the system order, since the results presented below are not affected by the chosen value of $o$, as long as it is larger than $n$.
\begin{remark}
\label{r:models_order_matching}
With a slight abuse of notation, in the remainder we imply that, when $o \neq n$, the parameter vectors $\theta_p$ (if $o<n$), or $\theta_p^0$ (if $o>n$), are appropriately padded with zero entries to equate their dimensions, thus keeping consistency of all matrix operations. \hfill \QED
\end{remark}
\begin{theorem}
\label{th:lambda_d}
Consider the asymptotically stable system \eqref{eq:sys_ss_def}. If Assumptions \ref{as:1}-\ref{as:set_distance} and \ref{as:model_order} hold, then: 
\begin{enumerate}
\item $\lambda_p \xrightarrow{p\to\infty} (\bar{d}_0 - \bar{d})$
\item $\underline{\lambda}_p \leq \lambda_p$
\item $\forall \eta \in (0,\lambda_p], \; \exists N < \infty : \underline{\lambda}_p \geq \lambda_p-\eta$ \\
\end{enumerate} 
\end{theorem}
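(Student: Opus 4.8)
The plan is to establish the three claims in order, all built on the decomposition $\varphi_p(k)=\psi_{p,o}(k)+\delta$, with $\psi_{p,o}(k)\in\Psi_{p,o}$ the noise-free regressor and $\delta\in\mathbb{D}_p$ the disturbance entering it, together with the exponential decay bounds \eqref{eq:real_decay_rate} and the zero-padding convention of Remark \ref{r:models_order_matching} (recall $o\ge n$ by Assumption \ref{as:model_order}), so that $z(k+p)=\psi_{p,o}(k)^{T}\theta_p^{0}$ holds even when $o>n$. The elementary fact I will use twice is that, for a noise-free regressor, the residual of the true parameter reduces to $y_p-\varphi_p^{T}\theta_p^{0}=d(k+p)-\delta^{T}\theta_p^{0}$.

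For claim 1, I would obtain the upper bound by evaluating the inner maximum in \eqref{eq:lambda_per_proof_first_def} at $\theta_p=\theta_p^{0}$, which lies in $\Omega$ for $\Omega$ large enough since \eqref{eq:real_decay_rate} bounds the true parameters uniformly in $p$; because $|\delta^{T}\theta_p^{0}|\le\bar{d}_0\sum_{i\ge 1}L_z\rho^{\,p+i}=:c\,\rho^{p}$, this gives $\lambda_p\le\bar{d}_0-\bar{d}+c\,\rho^{p}$. For the matching lower bound, I would fix an arbitrary $\theta_p\in\Omega$, pick any $\psi_{p,o}\in\Psi_{p,o}$, and set $\varphi_p:=\psi_{p,o}\in\Phi_p$; since this regressor comes from a true trajectory whose output $z(k+p)=\varphi_p^{T}\theta_p^{0}$ can be measured with any $|d(k+p)|\le\bar{d}_0$, both $[\varphi_p^{T},\,\varphi_p^{T}\theta_p^{0}+\bar{d}_0]^{T}$ and $[\varphi_p^{T},\,\varphi_p^{T}\theta_p^{0}-\bar{d}_0]^{T}$ belong to $\mathscr{V}_p$, and $\max(|a+\bar{d}_0|,|a-\bar{d}_0|)=|a|+\bar{d}_0$ gives $\max_{\mathscr{V}_p}(|y_p-\varphi_p^{T}\theta_p|-\bar{d})\ge|\varphi_p^{T}(\theta_p^{0}-\theta_p)|+\bar{d}_0-\bar{d}\ge\bar{d}_0-\bar{d}$; taking the minimum over $\theta_p\in\Omega$ yields $\lambda_p\ge\bar{d}_0-\bar{d}$ for every $p$. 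Combining the two bounds gives $\lambda_p\xrightarrow{p\to\infty}\bar{d}_0-\bar{d}$.

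Claim 2 I would get from the fact that every sampled pair is built from genuine measurements, so $\tilde{\mathscr{V}}_p^{N}\subseteq\mathscr{V}_p$ by \eqref{eq:set_regr_noise_o} and \eqref{eq:all_var_set}: if $\theta_p^{\star}\in\Omega$ attains $\lambda_p$, then $(\theta_p^{\star},\lambda_p)$ is feasible for the LP \eqref{eq:lambda_est_def} in the relevant regime $\bar{d}\le\bar{d}_0$ (where $\lambda_p\ge\bar{d}_0-\bar{d}\ge 0$ by claim 1), hence $\underline{\lambda}_p\le\lambda_p$. For claim 3, the only place where Assumption \ref{as:set_distance} enters, I would fix $p$ and $\eta\in(0,\lambda_p]$, let $\underline{\theta}_p\in\Omega$ be an optimizer of \eqref{eq:lambda_est_def}, and choose $N$ so that $d_2(\mathscr{V}_p,\tilde{\mathscr{V}}_p^{N})\le\beta$ with $\beta$ to be fixed. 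Every $[\varphi_p^{T},\,y_p]^{T}\in\mathscr{V}_p$ then has a sampled neighbour $[\tilde\varphi_p^{T},\,\tilde y_p]^{T}$ within distance $\beta$, and the triangle inequality gives
\[
|y_p-\varphi_p^{T}\underline{\theta}_p|\le|\tilde y_p-\tilde\varphi_p^{T}\underline{\theta}_p|+|y_p-\tilde y_p|+\|\varphi_p-\tilde\varphi_p\|_2\|\underline{\theta}_p\|_2\le\underline{\lambda}_p+\bar{d}+\beta(1+r_\Omega),
\]
where $r_\Omega$ bounds $\|\theta_p\|_2$ over $\Omega$. Maximizing over $\mathscr{V}_p$ and using $\lambda_p\le\max_{\mathscr{V}_p}(|y_p-\varphi_p^{T}\underline{\theta}_p|-\bar{d})$ (legitimate because $\underline{\theta}_p\in\Omega$) gives $\lambda_p\le\underline{\lambda}_p+\beta(1+r_\Omega)$; picking $\beta=\eta/(1+r_\Omega)$, which corresponds to a finite $N$ by Assumption \ref{as:set_distance}, yields $\underline{\lambda}_p\ge\lambda_p-\eta$.

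The hard part will be the step in claim 3 that transfers the finite-data bound to all of $\mathscr{V}_p$: it requires a finite, $N$-independent bound on $\|\underline{\theta}_p\|_2$, which is exactly why the compact surrogate $\Omega$ is introduced; one must ensure the minimizer of \eqref{eq:lambda_est_def} is, or may be taken, inside $\Omega$, since otherwise an arbitrarily small regressor perturbation could still cause an $O(\|\underline{\theta}_p\|_2)$ change in $\varphi_p^{T}\underline{\theta}_p$ and break the estimate. A secondary, more routine point is the bookkeeping of the order mismatch $o\ge n$ via zero-padding, and checking that the $\rho^{p}$ rate in claim 1 is uniform over all regressors in $\Phi_p$ --- which it is, since $\delta^{T}\theta_p^{0}$ involves only the exponentially small $z$-block of $\theta_p^{0}$.
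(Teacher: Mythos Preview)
Your proposal is correct, and in several places cleaner than the paper's own argument, but the routes differ in two notable respects.

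\textbf{Claim 1.} You use a two-sided sandwich: the upper bound comes from plugging $\theta_p=\theta_p^{0}$ into the inner maximum (giving $\bar d_0-\bar d+c\rho^{p}$), and the lower bound comes from choosing the noise-free regressor $\varphi_p=\psi_{p,o}$ and letting the output disturbance hit $\pm\bar d_0$. The paper instead argues for the \emph{exact} value $\lambda_p=\bar d_0\|\theta_{p,z}^{0}\|_1+\bar d_0-\bar d$, by exhibiting a specific worst-case pair $(\underline\varphi_p,\underline y_p)$ with extremal regressor noise $|\underline\Delta_p|=[\bar d_0,\dots,\bar d_0,0,\dots,0]^T$ and then showing the optimal $\theta_p$ annihilates $\underline\varphi_p^{T}(\theta_p^{0}-\theta_p)$. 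Your sandwich suffices for the limit statement of Theorem~\ref{th:lambda_d}, and is more transparent; the paper's computation, on the other hand, is what directly yields the equality in Corollary~\ref{th:Delta_theta}, which your bounds would only give as an inequality $\lambda_p\le\bar d_0\|\theta_{p,z}^{0}\|_1$.

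\textbf{Claim 3.} You take the LP minimizer $\underline\theta_p$, transport the constraint from each sampled point to its $\beta$-neighbour in $\mathscr{V}_p$, and then maximize over $\mathscr{V}_p$ to recover $\lambda_p$. The paper goes the other way: it fixes the worst-case point $(\underline\varphi_p,\underline y_p)\in\mathscr{V}_p$, picks its nearest sampled neighbour, and lower-bounds $\underline\lambda_p$ by the single-constraint LP at that neighbour. Both arguments hinge on a uniform bound $\|\theta_p\|_2\le r_\Omega$; you correctly flag that the LP \eqref{eq:lambda_est_def} is not formally constrained to $\Omega$, so one must either add that constraint or argue that a minimizer can be taken inside $\Omega$. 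The paper's proof has the same issue (it silently passes from $\max_{\theta_p\in\Omega}$ to $\max_{\theta_p\in\Theta_p^{0}}$ in its final step), so your explicit caveat is an improvement rather than a gap.
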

\begin{proof}
See the appendix.
\end{proof}
%
%
\begin{corollary}
\label{th:Delta_theta}
Consider the asymptotically stable system \eqref{eq:sys_ss_def}. If Assumptions \ref{as:1}-\ref{as:set_distance} and \ref{as:model_order} hold, and if the disturbance bound is correctly chosen as $\bar{d}=\bar{d}_0$, then:
\begin{equation}
\label{eq:lambda_true_no_dist}\lambda_p=\bar{d}_0\|\theta_{p,z}^0\|_1\leq n\,\bar{d}_0\,L_z\,\rho^{p+1} 
\end{equation} 
\end{corollary}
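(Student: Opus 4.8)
\emph{Plan.} I would reduce $\lambda_p$ to closed form (following the same line as the proof of Theorem~\ref{th:lambda_d}), read off the upper bound at $\theta_p=\theta_p^{0}$, obtain the matching lower bound from a single inequality about the regressor set $\Psi_{p,o}$, and finish with \eqref{eq:real_decay_rate}. For the closed form: using the definitions \eqref{eq:set_regr_noise_o}, \eqref{eq:set_noise:o}, \eqref{eq:all_var_set} together with the true identity \eqref{eq:real_sys_ARX_model}, every $(\varphi_p,y_p)\in\mathscr{V}_p$ can be written as $\varphi_p=\psi+\delta$, $y_p=\psi^{T}\theta_p^{0}+d$, with a \emph{common} true regressor $\psi\in\Psi_{p,o}$, a regressor-noise term $\delta\in\mathbb{D}_p$, and $|d|\le\bar d_0$ (zero-padding $\theta_p^{0}$ as in Remark~\ref{r:models_order_matching}, using $o\ge n$ from Assumption~\ref{as:model_order}). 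Partition $\theta_p=[\theta_{p,y}^{T}\ \theta_{p,u}^{T}]^{T}$ conformably with $\varphi_p=[Y_o^{T}(k)\ U_{p,o}^{T}(k)]^{T}$, so $\theta_{p,y}\in\mathbb{R}^{o}$. Then $y_p-\varphi_p^{T}\theta_p=\psi^{T}(\theta_p^{0}-\theta_p)+d-\delta^{T}\theta_p$; since $\delta$ acts only on the first $o$ coordinates (those paired with $\theta_{p,y}$) and, for $p\ge1$, the disturbance $d$ at time $k+p$ is not among the entries of $\delta$, maximizing over $\psi\in\Psi_{p,o}$, $\delta\in\mathbb{D}_p$, $|d|\le\bar d_0$ gives, via \eqref{eq:epsilon_def} and \eqref{eq:lambda_per_proof_first_def},
\[
\bar\varepsilon_p(\theta_p)=\max_{\psi\in\Psi_{p,o}}\bigl|\psi^{T}(\theta_p^{0}-\theta_p)\bigr|+\bar d_0\bigl(1+\|\theta_{p,y}\|_1\bigr)-\bar d,\qquad \lambda_p=\min_{\theta_p\in\Omega}\bar\varepsilon_p(\theta_p).
\]
Setting $\bar d=\bar d_0$ removes the offset: $\bar\varepsilon_p(\theta_p)=\max_{\psi\in\Psi_{p,o}}|\psi^{T}(\theta_p^{0}-\theta_p)|+\bar d_0\|\theta_{p,y}\|_1$.

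Evaluating at $\theta_p=\theta_p^{0}$ kills the first term and leaves $\bar d_0$ times the $1$-norm of the output block of $\theta_p^{0}$, which is $\theta_{p,z}^{0}$ padded with zeros (the padding does not change the norm); hence $\lambda_p\le\bar d_0\|\theta_{p,z}^{0}\|_1$. For the reverse inequality I would establish the ``richness'' bound $\max_{\psi\in\Psi_{p,o}}|\psi^{T}v|\ge\bar d_0\|v_y\|_1$ for every $v=[v_y^{T}\ v_u^{T}]^{T}$ with $v_y\in\mathbb{R}^{o}$, and then, applying it with $v=\theta_p^{0}-\theta_p$ and the triangle inequality $\|v_y\|_1+\|\theta_{p,y}\|_1\ge\|v_y+\theta_{p,y}\|_1=\|\theta_{p,z}^{0}\|_1$, conclude $\bar\varepsilon_p(\theta_p)\ge\bar d_0\|\theta_{p,z}^{0}\|_1$ for \emph{all} $\theta_p$, i.e. $\lambda_p\ge\bar d_0\|\theta_{p,z}^{0}\|_1$. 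Combining the two bounds yields $\lambda_p=\bar d_0\|\theta_{p,z}^{0}\|_1$.

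The richness bound is the step I expect to be the main obstacle, and it is where Assumptions~\ref{A:model_reach_obsv}--\ref{as:set_distance} genuinely enter: complete reachability and observability make the free-response part of the output regressor $Z_o$ span the $n$ output-relevant directions, while the persistence-of-excitation/bound-exploring content of Assumption~\ref{as:set_distance} must be used to show that the true-system regressors probe the output-coefficient block at least as strongly as a $\pm\bar d_0$ per-channel disturbance can act on it. Turning this intuition into the quantitative inequality $\max_{\psi\in\Psi_{p,o}}|\psi^{T}v|\ge\bar d_0\|v_y\|_1$ (equivalently, that $\theta_p^{0}$ is a minimizer of the convex map $\theta_p\mapsto\bar\varepsilon_p(\theta_p)$) is the delicate point of the argument.

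Finally, by \eqref{eq:real_decay_rate}, and since the system is asymptotically stable so that $\rho\in(0,1)$, we have $|\theta_{p,z}^{0,(i)}|\le L_z\rho^{p+i}\le L_z\rho^{p+1}$ for every $i\ge1$; summing over the at most $n$ such entries gives $\|\theta_{p,z}^{0}\|_1\le n\,L_z\,\rho^{p+1}$, and multiplying by $\bar d_0$ yields $\lambda_p=\bar d_0\|\theta_{p,z}^{0}\|_1\le n\,\bar d_0\,L_z\,\rho^{p+1}$, as claimed. Every step other than the richness bound is a direct computation or an immediate application of \eqref{eq:real_decay_rate}.
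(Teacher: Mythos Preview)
Your approach is essentially the paper's: both obtain $\lambda_p=\bar d_0\|\theta^0_{p,z}\|_1$ by specializing the identity \eqref{eq:resulting_lambda_in_proof} from the proof of Theorem~\ref{th:lambda_d} to $\bar d=\bar d_0$, and then bound $\|\theta^0_{p,z}\|_1$ termwise via \eqref{eq:real_decay_rate}. The paper's own proof simply cites \eqref{eq:resulting_lambda_in_proof} and does not separately argue the lower bound you isolate as the ``richness'' step; your decomposition into an upper bound at $\theta_p=\theta_p^{0}$ and a matching lower bound is therefore more explicit than the original on exactly the point you flag as delicate.
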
 
\begin{proof}
	See the appendix.
\end{proof}
\begin{remark}
\label{r:results_consequences} Theorem \ref{th:lambda_d} and Corollary \ref{th:Delta_theta} imply two consequences that are useful for model identification. The first is that, when $\bar{d}=\bar{d}_0$ and $o<n$, $\lambda_p$ converges to a non-zero value as $p$ increases, which is due to the model order mismatch. The rationale behind this statement is that, when $o<n$, there exists a choice of $\varphi_p$ and $y_p$ inside $\mathscr{V}_p$ such that it is not possible to find a $\theta_p$ able to bring the error $\varphi_p^T (\theta_p^0 - \theta_p)$ to zero. This observation will be used to estimate the model order in the next section.  The second consequence is that $\lambda_p \xrightarrow{p \to \infty} 0$ with the same decay rate as that of the true system parameters, thus providing a way to estimate the latter. \hfill \QED
\end{remark}
\begin{remark}
\label{rm:prop_lambda_real}
Here we resort to the result demonstrated in \cite{TFFS}, which provides us with guarantees that the estimated $\underline{\lambda}_p$ converges to $\lambda_p$ from below as $N$ grows, meaning that also $\underline{\lambda}_p$ will undergo by the properties described by Theorem \ref{th:lambda_d} and Corollary \ref{th:Delta_theta}. \hfill \QED
\end{remark}
	
\subsection{Estimation of disturbance bound, system order, and decay rate}
\label{ss:dim_lambda_dbar}
From Theorem \ref{th:lambda_d} it follows that, for $N \to \infty$ and $o \geq n$, picking a disturbance bound estimate $\bar{d}\geq \bar{d}_0$ results in $\underline{\lambda}_p$ converging to zero as $p$ increases; instead, choosing $\bar{d} < \bar{d}_0$ results in $\underline{\lambda}_p$ converging to a non-zero value. We resort to this property to estimate the value of the disturbance bound, as described by Procedure \ref{p:d_bar_est_procedure}.
\begin{procedure}
\caption{Estimation of $\bar{d}_0$}
\label{p:d_bar_est_procedure}
\begin{enumerate}
\item Choose a large value as initial guess of $o$.
\item Set a starting value of $\bar{d}$ small enough to have $\bar{d}<\bar{d}_0$.
\item Gradually increase $\bar{d}$, recalculating each time $\underline{\lambda}_p$, until the first value of $\bar{d}$ under which $\exists \bar{p}: \underline{\lambda}_p=0 \; \forall p > \bar{p}$ is found.
\item The obtained $\bar{d}$ corresponds to the disturbance bound, and the related $\bar{p}$ represents the system settling time.
\end{enumerate}
\end{procedure} \\
Then, we propose an approach, based on the observation reported in Remark \ref{r:results_consequences}, to estimate the minimal model order that verifies Assumption \ref{as:model_order}, as described by Procedure \ref{p:o_est_procedure}.
\begin{procedure}
\caption{Estimation of $n$}
\label{p:o_est_procedure}
\begin{enumerate}
\item Set $\bar{d}$ and $\bar{p}$ to the values resulting from Procedure \ref{p:d_bar_est_procedure}.
\item Choose a large value as initial guess of $o$.
\item Gradually decrease $o$, recalculating each time $\underline{\lambda}_p$, until the first value of $o$ under which $\exists p>\bar{p} : \underline{\lambda}_p>0$ is found.
\item The last value of $o$ under which $\underline{\lambda}_p=0 \; \forall p > \bar{p}$ will be the minimal predictor order.
\end{enumerate}
\end{procedure}

Finally, the observed decay rate of $\underline{\lambda}_p$ can be used to estimate the exponentially decaying trends \eqref{eq:real_decay_rate} of the system. In particular, our goal is to derive quantities  $\hat{\rho}\approx\rho$, $\hat{L}_z\approx L_z$, and $\hat{L}_u\approx L_u$.

Let us define $\boldsymbol{f}_{\varepsilon} \doteq \left[\hat{\bar{\varepsilon}}_1 \; \cdots \; \hat{\bar{\varepsilon}}_{p_{\textrm{max}}} \right]^T$, where $\hat{\bar{\varepsilon}}_p$ is obtained from \eqref{eq:epsilon_hat_def} with $\bar{d}$ resulting from Procedure \ref{p:d_bar_est_procedure}, and $p_{\textrm{max}}>\bar{p}$. Let us also define, for given values of $\hat{L}$ and $\hat{\rho}$, the quantities $g_{L\rho}(p) \doteq \hat{L} \hat{\rho}^p,\,p\in[1,p_{\textrm{max}}]$. Then, we solve the following optimization problem to compute $\hat{\rho}$:
\begin{equation}
\label{eq:exp_dec_estim}
\begin{aligned}
\left[ \hat{L}, \hat{\rho} \right] =&\arg \; \min_{L, \rho} \; \left\Vert \boldsymbol{f}_{\varepsilon} - \boldsymbol{g}_{L \rho} \right\Vert^2_2   \\
&\;\text{subject to} \\
&\; \boldsymbol{g}_{L \rho} \succeq \boldsymbol{f}_{\varepsilon} \\
&\; L > 0, \; 0 < \rho < 1
\end{aligned}
\end{equation}
where $\boldsymbol{g}_{L\rho}= \begin{bmatrix} g_{L\rho}(1) & \cdots & g_{L\rho}(p_{\textrm{max}}) \end{bmatrix}^T$. In practice, the computed value of $\hat{\rho}$ minimizes the quadratic norm of the difference between $\hat{\bar{\varepsilon}}_p$ (i.e. the observed decay rate) and $g_{L\rho}(p)$ (the theoretical exponential decay rate). Supported by Corollary \ref{th:Delta_theta}, this estimate of $\hat{\rho}$ is consistent with the system decay rate. However, we still need to estimate suitable values of $\hat{L}_z,\,\hat{L}_u$. 
For the former, we exploit the FPSs $\Theta_p$ considering the parameters pertaining to the output values inside the regressors $\varphi_p$:
\begin{equation}
\label{eq:L_z_estimate}\hat{L}_z=\left. \left( \max_{p \in [1, \bar{p}]} \; \max_{\theta_p \in \Theta_p} \; \max_{i=1, \hdots, o} \; \theta_p^{(i)} \right) \middle/ \hat{\rho} \right. .
\end{equation}
Regarding $\hat{L}_u$, we instead consider the parameters pertaining to the $o$ most recent input values inside the regressors $\varphi_p$: 
\begin{equation}
\label{eq:L_final_est}
\hat{L}_u= \left. \left( \max_{p \in [1, \bar{p}]} \; \max_{\theta_p \in \Theta_p} \; \max_{i=o+1, \hdots, 2o} \; \theta_p^{(i)} \right) \middle/ \hat{\rho} \right. .
\end{equation}
Indeed, the magnitude of these parameters is not affected by the decay rate and it can be used to estimate the true bounds $L_z$ and $L_u$ (see \eqref{eq:real_decay_rate}).

\section{Identification of one-step-ahead predictors with guaranteed simulation error bounds}
\label{s:one_step_estimate}
Exploiting the results and procedures presented in Section \ref{s:MS_models_def}, we are now in position to address part b) of Problem \ref{p:probl_statement}. In particular, we present new methods to learn the parameters of one-step-ahead prediction models of the form \eqref{eq:1s_pred_model_def}, considering the simulation (multi-step) accuracy and trying to enforce asymptotic stability of the predictor as well. The first step is to refine the FPSs $\Theta_p$ \eqref{eq:FPS_orig_def}, by adding additional constraints that take into account the estimated system decay rate.
\subsection{Feasible Parameter Sets with constraints on the parameters decay rate}
\label{ss:new_FPS_def}
Let us define:
\begin{equation}
\label{eq:exp_bound_sets}
\Gamma_p= \bigg\{ \theta_p : |\theta_{p,u}^{(i)}| \leq \hat{L}_u \hat{\rho}^i, \; \forall i \in [1,p+o-1], \; \wedge \; | \theta_{p,y}^{(i)} | \leq \hat{L}_z \hat{\rho}^{p+i}, \; \forall i \in [1,o] \bigg\}
\end{equation}
Then, we modify the Feasible Parameter Sets as follows:
\begin{equation}
\label{eq:new_FPS_def}
\Theta_p^{L\rho}= \Theta_p \cap \Gamma_p.
\end{equation}
\begin{assumption} (Estimated decay rate)
	\label{as:Lr_overbound}
	The parameters of the estimated exponential decay rate are such that $\hat{\rho}\in[\rho,1)$, $\hat{L}_z\geq L_z$, and $\hat{L}_u\geq L_u$.  \hfill \QED
\end{assumption}
\begin{remark}
\label{r:FPS_non_empty}
Under Assumptions \ref{as:eps_tau_overbound}, \ref{as:model_order} and \ref{as:Lr_overbound}, it follows that $\theta_p^0 \in \Theta_p^{L\rho}, \; \forall p$, i.e. each FPS \eqref{eq:new_FPS_def} is non-empty and contains the parameters of the corresponding iterated model of the system \eqref{eq:real_sys_ARX_1s_model}. 
These assumptions cannot be verified in practice when a finite data-set is used. However, as long as the sets $\Theta_p^{L\rho}$ are non-empty (which can be easily verified, since they are all polytopes), we can be confident that the computed estimates and prior assumptions are not invalidated by data. Whenever $\Theta_p^{L\rho}$ becomes empty for some $p$, the estimated bounds can be enlarged until non-empty sets are obtained again. \hfill \QED
\end{remark}

We describe next two possible procedures to estimate $\theta_1$ \eqref{eq:1s_pred_model_def}, exploiting the modified FPSs. Both procedures are based on nonlinear programs.
\subsection{Method I - minimize the worst-case simulation error bound}
\label{ss:idea3}
This method is based on the concept of global error bound. Here we want to find the predictor model that minimizes the maximum worst-case error bound, along the considered prediction horizon. This is done by solving the following problem:
\begin{equation}
\label{eq:idea3}
\begin{aligned}
\hat{\theta}_1=&\arg \; \min_{\theta_1} \left\Vert \boldsymbol{\tau}\left( \boldsymbol{\theta} \right) \right\Vert_\infty \\
&\;\text{subject to} \\
&\; \theta_{p} \in \Theta_p^{L\rho}, \; \forall p \in [1,\bar{p}]
\end{aligned}
\end{equation}
where $\boldsymbol{\tau}\left( \boldsymbol{\theta} \right)= \begin{bmatrix} \hat{\tau}_1(\theta_1) & \hat{\tau}_2(\theta_2) & \cdots & \hat{\tau}_{\bar{p}}(\theta_{\bar{p}}) \end{bmatrix}^T$, $\theta_p=h(\theta_1,p,o)$, and $\hat{\tau}_p(\theta_p)$ is defined as in \eqref{eq:tau_hat_orig_def}. The resulting optimization problem is:
\begin{equation}
\label{eq:idea3_exp}
\begin{aligned}
\hat{\theta}_1=&\arg \min_{\theta_1} \left( \max_{p \in [1,\bar{p}]} \max_{\, i=1,\hdots,N} \max_{\, \theta \in \Theta_p^{L\rho}} \left\vert \tilde{\varphi}_p(i)^T (\theta-\theta_p) \right\vert + \hat{\bar{\varepsilon}}_p \right) \\
&\;\text{subject to} \\
&\; \theta_{p} \in \Theta_p^{L\rho}, \; \forall p \in [1,\bar{p}]
\end{aligned}
\end{equation}
Problem \eqref{eq:idea3_exp} can be rewritten into a simpler nonlinear minimization problem. First of all, the absolute value can be split in two terms introducing the following quantity: 
$$\check{\varphi}_p(j)= \left\{ \begin{matrix} \tilde{\varphi}_p(j) \quad \text{if} \quad  j \leq N \\ -\tilde{\varphi}_p(j) \quad \text{if} \quad j > N \end{matrix} \quad \text{for} \quad j=1,\hdots,2N. \right.$$
Then, by defining:
$$
c_{j_p}= \max_{\theta \in \Theta_p^{L\rho}} \; \check{\varphi}_p(j)^T \theta, \quad j=1,\hdots,2N, \quad p=1,\hdots,\bar{p},
$$
we can reformulate \eqref{eq:idea3_exp} as:
\begin{equation}
\label{eq:idea3_exp_ref1}
\begin{aligned}
\hat{\theta}_1=&\text{arg} \; \min_{\theta_1} \; \max_{p \in [1,\bar{p}]} \; \max_{i=1,\hdots,2N} (c_{j_p} - \check{\varphi}_p(j)^T \theta_p) \\
&\;\text{subject to} \\
&\; \theta_{p} \in \Theta_p^{L\rho}, \; \forall p \in [1,\bar{p}]
\end{aligned}
\end{equation}
The optimization problem defined by \eqref{eq:idea3_exp_ref1} corresponds to:
\begin{equation}
\label{eq:idea3_exp_ref2}
\begin{aligned}
\hat{\theta}_1=&\text{arg} \; \min_{\theta_1} \; \zeta \\
&\,\text{subject to} \\
&\, c_{j_p} - \check{\varphi}_p(j)^T \theta_p \leq \zeta, \; j=1,\hdots,2N, \; p=1,\hdots,\bar{p} \\
&\, \theta_{p} \in \Theta_p^{L\rho}, \; \forall p \in [1,\bar{p}]
\end{aligned}
\end{equation}
This results into a nonlinear optimization problem, having $2N$ linear constraints and $2N(\bar{p}-1)$ nonlinear constraints, plus $2N\bar{p}$ nonlinear constraints that requires the previous solution of $2N\bar{p}$ LP problems.
\subsection{Method II - minimize the simulation error enforcing the exponential decay rate}
\label{ss:idea4}
Here we propose a different approach, which is based on the SEM criterion. The idea is to minimize the simulation error produced by the one-step iterated prediction model, given a certain initial condition $\varphi_1(0)$. This results in:
\begin{equation}
\label{eq:idea4}
\begin{aligned}
\hat{\theta}_1=&\text{arg} \; \min_{\theta_1 \in \Theta_1^{L\rho}} \left\Vert \tilde{\boldsymbol{Y}} - \hat{\boldsymbol{Z}}(\theta_1) \right\Vert^2_2 \\
&\,\text{subject to} \\
&\, \theta_p \in \Gamma_p, \; \forall p \in [2,N]
\end{aligned}
\end{equation}
where $\tilde{\boldsymbol{Y}}= \begin{bmatrix} \tilde{y}(1) & \tilde{y}(2) & \cdots & \tilde{y}(N) \end{bmatrix}^T$, $\hat{\boldsymbol{Z}}(\theta_1)= \begin{bmatrix} \tilde{\varphi}_1(0)^T \theta_1 & \tilde{\varphi}_2(0)^T \theta_2 & \cdots & \tilde{\varphi}_N(0)^T \theta_N \end{bmatrix}^T$, and $\theta_p=h(\theta_1,p,o)$. 
Equation \eqref{eq:idea4} corresponds to a nonlinear optimization problem, having $2N$ linear constraints.

\section{Simulation results}
\label{s:sim_res}
The performance of the proposed identification approaches has been assessed through their application to a simulation case study. We resort to a SISO, asymptotically stable, underdamped system, whose output is affected by a uniformly distributed random noise, bounded in the interval $[-0.1, \; 0.1]$ (i.e. $\bar{d}_0=0.1$). The transfer function of said system is:
\begin{equation}
G(s) = \frac{160}{\left( s+10 \right) \left( s^2 + 0.8s + 16 \right)}
\end{equation}
Input and output data points are acquired with a sampling time $T_s=0.1$. The data-set collected for the identification phase and the data-set used for the validation phase contain $N=1500$ and $N_v=1500$ samples of each signal, respectively. The input signal takes values in the set $\{ -1 ; \; 0; \; 1\}$ randomly every $10$ time units. Fig. \ref{f:y_noise} depicts the behavior of the measured system output during the identification experiment.
%
\begin{figure}[thpb]
	\centering
	\includegraphics[width=0.6\columnwidth]{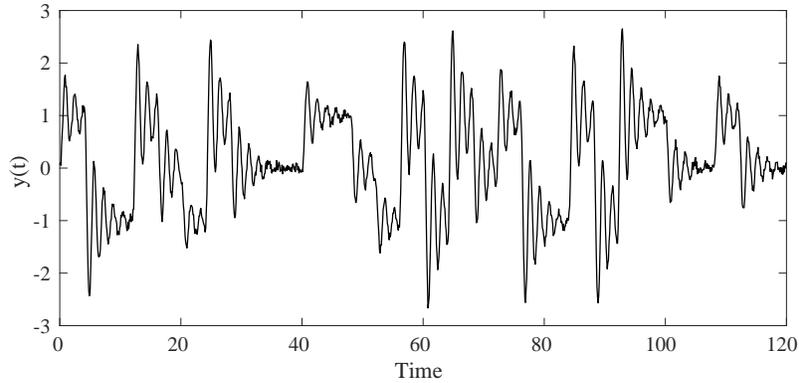}
	\caption{Measured system output during the identification experiment.}
	\label{f:y_noise}
\end{figure}
\begin{figure}[thpb]
	  \centering
      		\begin{tabular}{c}
     			 \includegraphics[width=0.6\columnwidth]{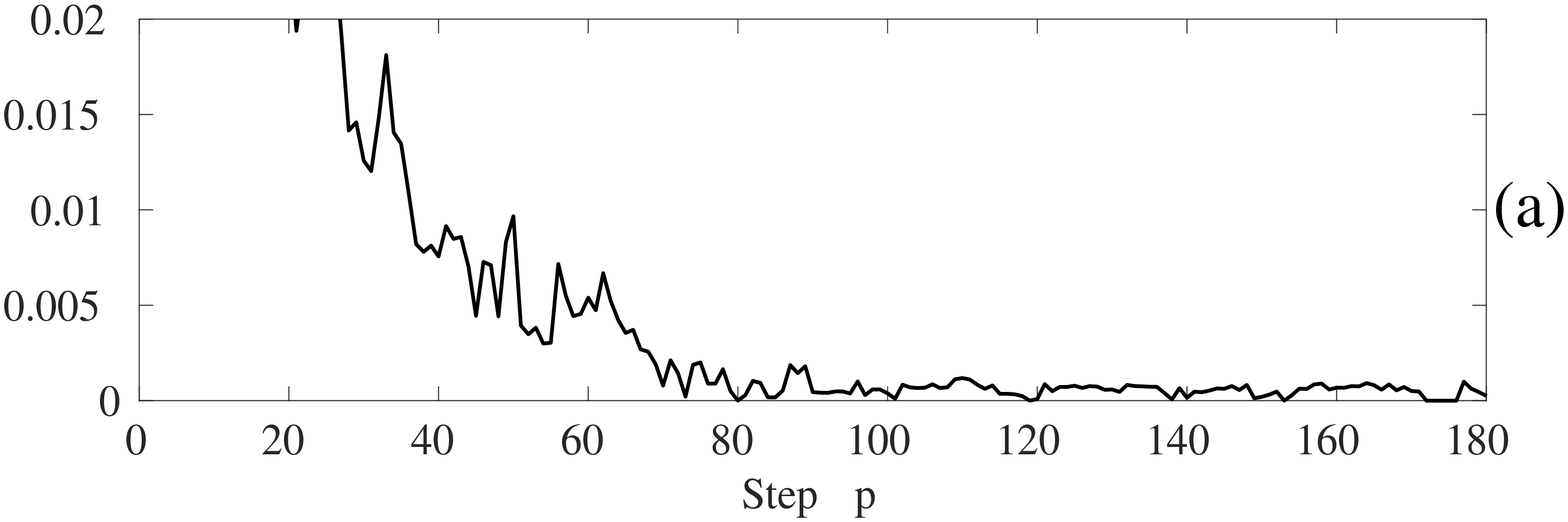} \\ 
     			 \includegraphics[width=0.6\columnwidth]{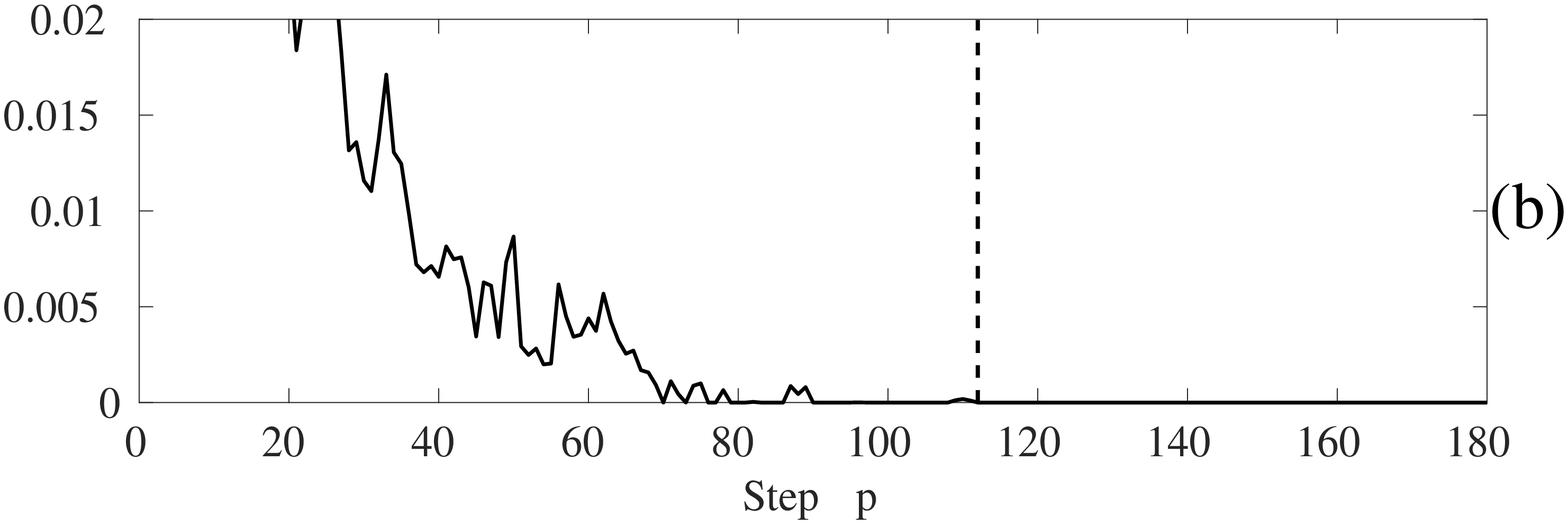} \\ 
     			 \includegraphics[width=0.6\columnwidth]{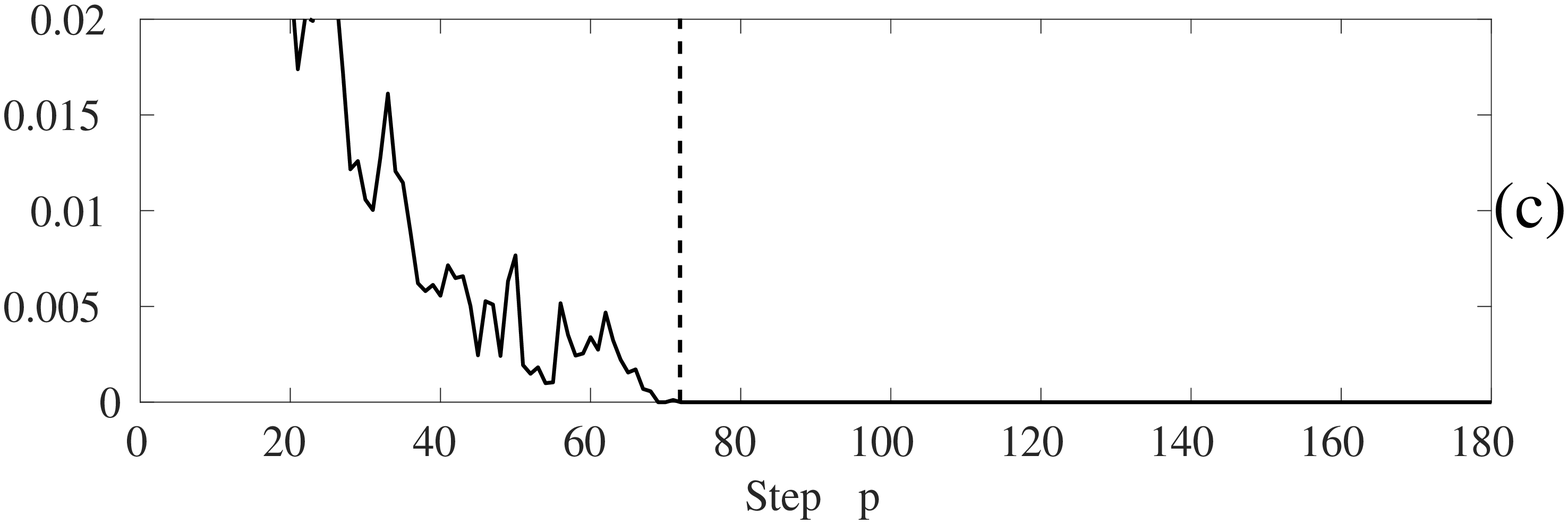}
      		\end{tabular}
      \caption{Estimated values of $\underline{\lambda}_p$ for different choice of the measurement disturbance bound; model order $o=5$. Fig. (a): $\bar{d}=0.098$; fig. (b): $\bar{d}=0.099$; fig. (c): $\bar{d}=0.1$. The dashed vertical lines indicate the value of $\bar{p}$ corresponding to each choice of $\bar{d}$.}
      \label{f:lambda_fun_d}
\end{figure} 

The first step of our identification procedure regards the estimation of the disturbance bound $\bar{d}$. Adopting the solution proposed in Procedure \ref{p:d_bar_est_procedure}, and choosing an initial model order $o=5$, we perform the calculation of $\underline{\lambda}_p$ for several values of $\bar{d}$. The result of this procedure is depicted in Fig. \ref{f:lambda_fun_d}. We decide to set $\bar{d}=0.099$, to which corresponds $\bar{p}=115$. An higher value of $\bar{d}$ would result in more conservative FPSs, while a lower value is not enough to obtain $\underline{\lambda}_p \xrightarrow{p \to \infty}0$, which is the desired result, as described by Procedure \ref{p:d_bar_est_procedure}. The obtained values of $\bar{d}$ and $\bar{p}$ are actually consistent with the true system parameters, as $\bar{d}_0=0.1$, and the time constant corresponding to the dominant poles of $G(s)$ is $T=2.5$, which results in a settling time of $125$ steps, under the chosen $T_s$.

Then, we resort to Procedure \ref{p:o_est_procedure} to obtain an estimate of the lowest order of the predictor model that verifies Assuption \ref{as:model_order}. The result of the mentioned procedure, for values of $o$ from $4$ to $2$, is shown in Fig. \ref{f:lambda_fun_o}. Here we adopt $o=3$, as it satisfies point 4) of Procedure \ref{p:o_est_procedure}; this value is consistent with the (a priori unknown) order of the considered system and verifies Assuption \ref{as:model_order}.
\begin{figure}[thpb]
	  \centering
      		\begin{tabular}{c}
     			 \includegraphics[width=0.6\columnwidth]{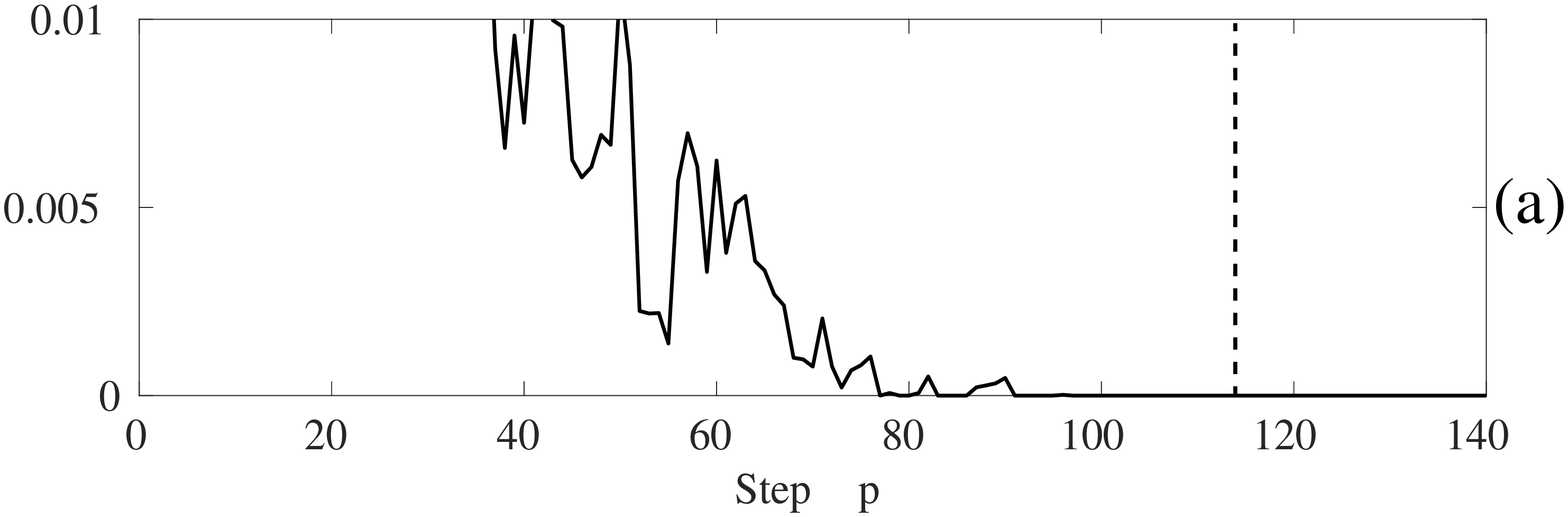} \\ 
     			 \includegraphics[width=0.6\columnwidth]{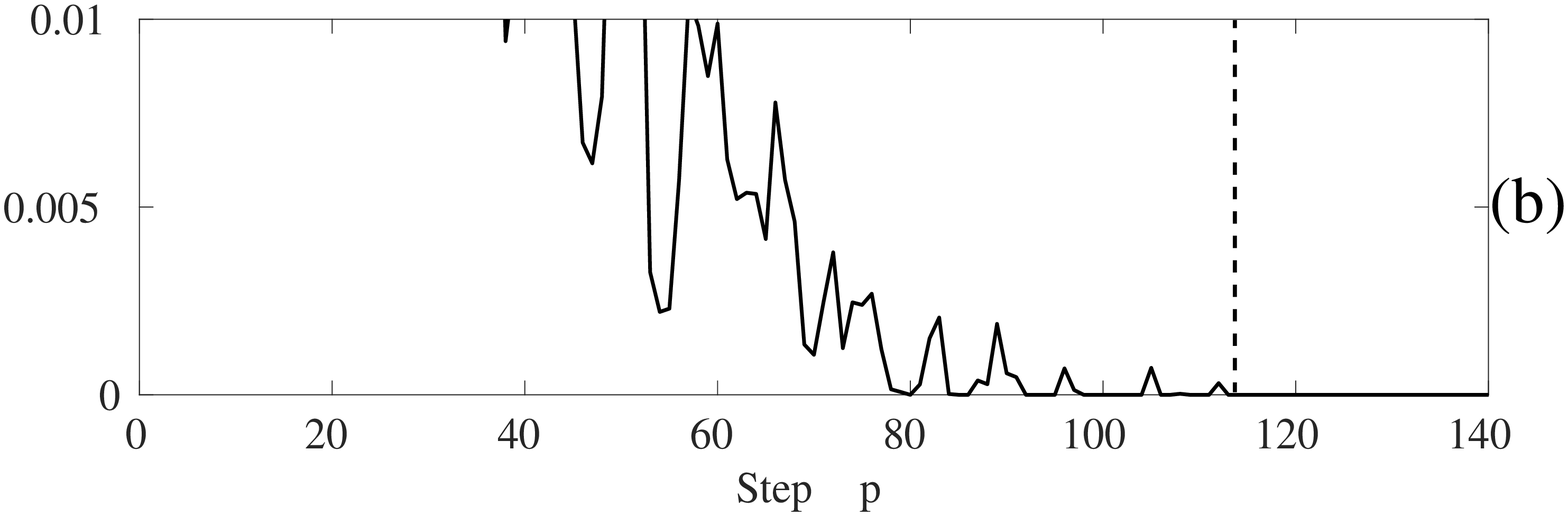} \\ 
     			 \includegraphics[width=0.6\columnwidth]{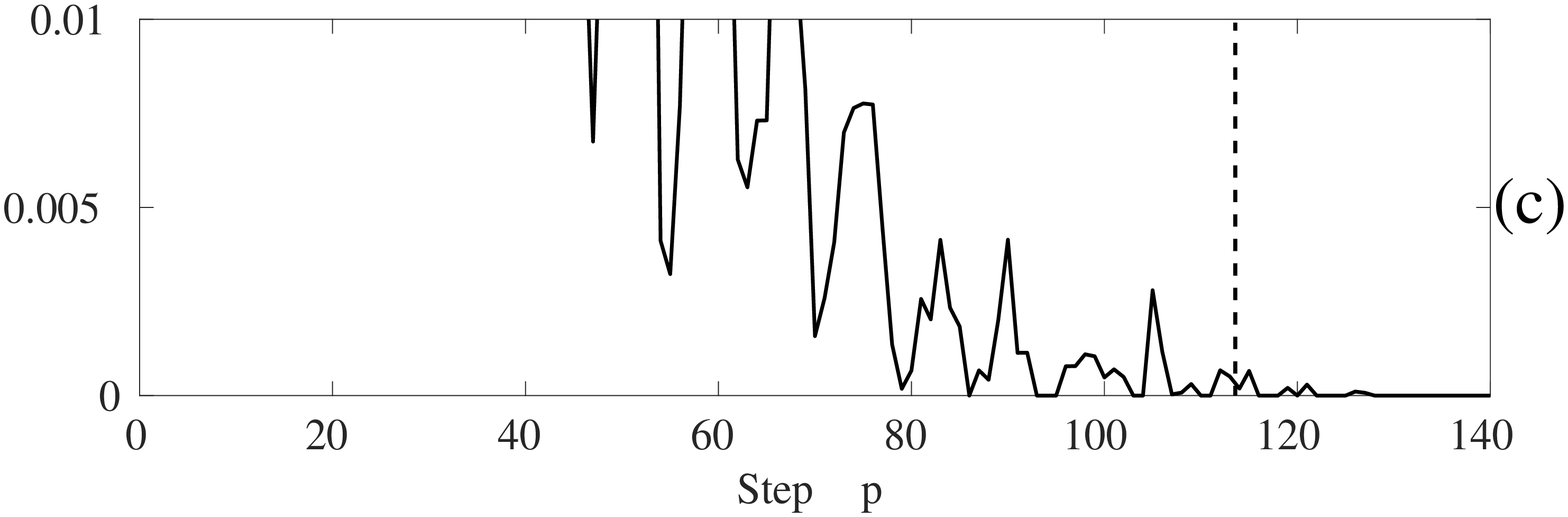}
      		\end{tabular}
      \caption{Estimated values of $\underline{\lambda}_p$ for different choice of the predictor model order; disturbance bound $\bar{d}=0.099$. Fig. (a): $o=4$; fig. (b): $o=3$; fig. (c): $o=2$. The dashed vertical lines indicate the value of $\bar{p}$ obtained for the chosen $\bar{d}$, and are used to set $o$ at the lowest possible value such that $\underline{\lambda}_p=0, \, \forall p>\bar{p}$.}
      \label{f:lambda_fun_o}
\end{figure} 

Having defined our choice of $\bar{d}$ and $o$, it is now possible to perform the procedure proposed in Section \ref{ss:dim_lambda_dbar} for the estimation of the system decay rate. Fig. \ref{f:lambda_bound_Lr} depicts the results of the estimation process of $\hat{\rho}$. Here $\hat{\rho}$ is estimated as in \eqref{eq:exp_dec_estim}; then, $\hat{L}_z$ and $\hat{L}_u$ are chosen as in \eqref{eq:L_z_estimate} and \eqref{eq:L_final_est}, respectively. This procedure results in $\hat{L}_z=1.8707$, $\hat{L}_u=0.6787$ and $\hat{\rho}=0.9645$. For a comparison, the true system decay rate is $\rho=0.96$.

Then, the values of $\underline{\lambda}_p$ corresponding to the chosen model order $o$ and disturbance bound $\bar{d}$ are inflated according to the coefficient $\alpha=1.3$, as motivated in Section \ref{s:MS_models_def}, while we set $\gamma=1.2$. The resulting $\hat{\bar{\varepsilon}}_p$ are used alongside $\bar{d}$, $\hat{L}_z$, $\hat{L}_u$ and $\hat{\rho}$, to define the FPSs for all the $p \in [1,\bar{p}]$, as in \eqref{eq:new_FPS_def}. 
\begin{figure}[thpb]
	\centering
	\includegraphics[width=0.6\columnwidth]{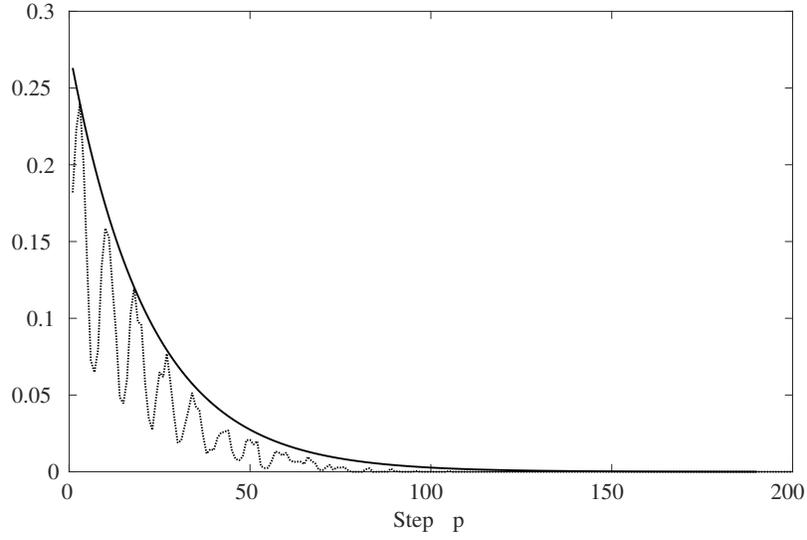}
	\caption{Exponentially decaying bound, estimated based on $\underline{\lambda}_p$. Solid line: estimated bound; dotted line: calculated values of $\hat{\bar{\varepsilon}}_p$.}
	\label{f:lambda_bound_Lr}
\end{figure} 

Finally, we adopt the identification approaches presented in Section \ref{s:one_step_estimate} to estimate the parameters of the one-step-ahead predictor, and then calculate the guaranteed accuracy bounds related to the obtained predictors, as in \eqref{eq:tau_hat_orig_def}.

As benchmarks for the proposed identification approaches, we consider a one-step-ahead prediction model identified according to the classical PEM criterion, another one identified using the SEM criterion, and the decoupled multi-step models, identified as proposed in \cite{TFFS}. Each of these decoupled multi-step models is the one that minimizes the corresponding global error bound $\hat{\tau}_p(\theta_p^*)$, and that are not linked one to the other by a one-step recursion, thus they denote the optimal performance achievable for every step $p$ in terms of minimization of the guaranteed error bound.
\begin{figure}[thpb]
	\centering
	\includegraphics[width=0.6\columnwidth]{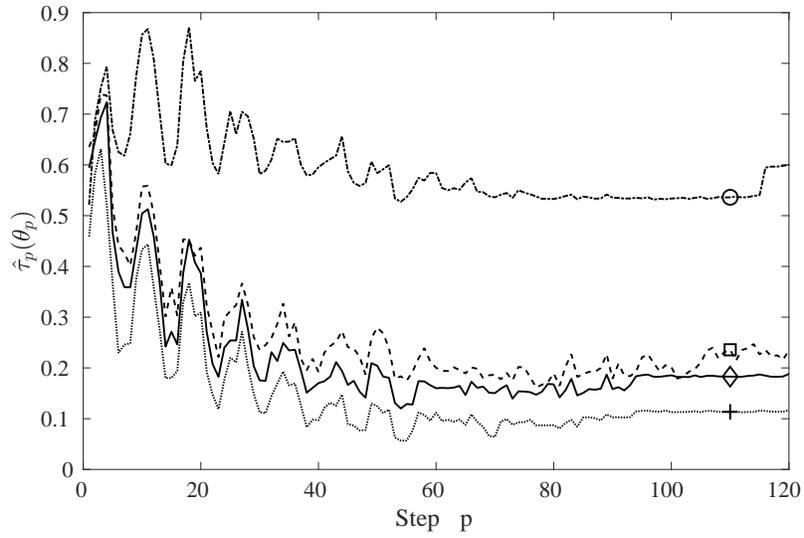}
	\caption{Guaranteed worst-case error bound. Dotted line with `$+$': multi-step approach; solid line with `$\diamond$': Method II; dashed line with `$\square$': SEM approach; dash-dot line with `$\circ$': PEM approach.}
	\label{f:Tau}
\end{figure} 
\begin{figure}[thpb]
	\centering
	\includegraphics[width=0.6\columnwidth]{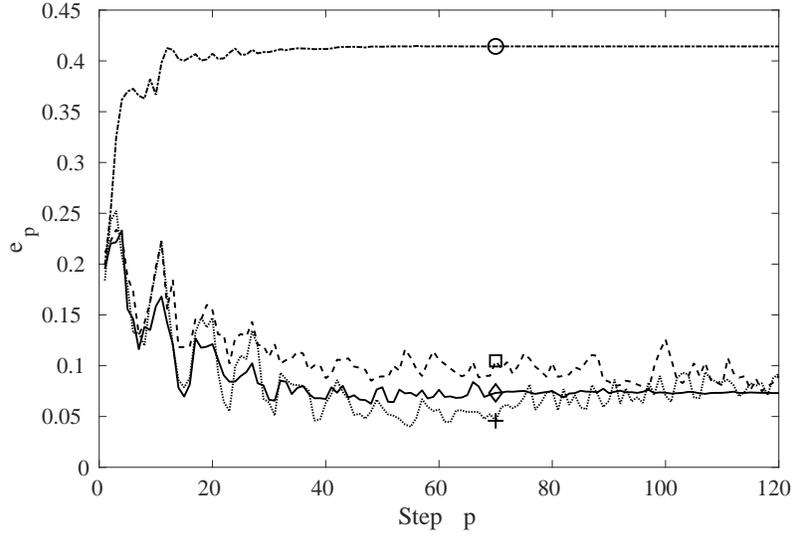}
	\caption{Estimation error calculated over validation data. Dotted line with `$+$': multi-step approach; solid line with `$\diamond$': Method II; dashed line with `$\square$': SEM approach; dash-dot line with `$\circ$': PEM approach.}
	\label{f:Tau_e_valid}
\end{figure} 
\begin{table}[thpb]
\label{t:tau_and_err}
\centering
\caption{Comparison between values of $\hat{\tau}_p$ and $e_p$ obtained by the proposed identification methods and the benchmark models.}
\begin{adjustbox}{width=0.85\columnwidth}
\begin{tabular}{|c V{3} c|c|c|c|c|c|c|c||c|c|}
\hline
 & \multicolumn{2}{c|}{PEM} & \multicolumn{2}{c|}{SEM} & \multicolumn{2}{c|}{Method I} & \multicolumn{2}{c||}{Method II} & \multicolumn{2}{c|}{Multi-step} \\ \hline
 & $\hat{\tau}_p$ & $e_{p}$ & $\hat{\tau}_p$ & $e_{p}$ & $\hat{\tau}_p$ & $e_{p}$ & $\hat{\tau}_p$ & $e_{p}$ & $\hat{\tau}_p$ & $e_{p}$ \\ \hline \hline
$p=1$ & 0.521 & 0.199 & 0.636 & 0.211 & 0.531 & \textbf{0.186} & 0.594 & 0.195 & 0.459 & 0.184 \\ \hline
$p=10$ & 0.857 & 0.367 & 0.557 & 0.197 & \textbf{0.536} & 0.163 & 0.504 & \textbf{0.158} & 0.433 & 0.193 \\ \hline
$p=35$ & 0.646 & 0.412 & 0.262 & 0.114 & 0.234 & \textbf{0.076} & 0.235 & 0.078 & 0.166 & 0.081 \\ \hline
$p=115$ & 0.540 & 0.414 & 0.227 & 0.076 & 0.185 & \textbf{0.053} & 0.187 & 0.074 & 0.116 & 0.083 \\ \hline
\end{tabular}
\end{adjustbox}
\end{table}

We use as performance indicators the guaranteed error bounds and the validation errors produced by each identification approach. The validation error for the $p$-step ahead model, calculated over the validation data-set, is defined as: 
\begin{equation}
\label{eq:valid_err}
e_p = \max_{k=o,\hdots,N_v-p} \left\vert z(k+p) - \hat{z}(k+p) \right\vert.
\end{equation}
Fig. \ref{f:Tau} and \ref{f:Tau_e_valid} depict the behavior of the guaranteed error bound and the validation error, respectively, corresponding to the various identification methods. Table I presents the values of the worst-case error bound and of the validation error of the $p$-step ahead model, for some values of $p$.

The presented numerical results show that the proposed approaches obtain better performances in terms of guaranteed error bound and validation error, with respect to both the classic PEM and SEM approaches. In particular, the second proposed identification method (Section \ref{ss:idea4}), which is based on the simulation error cost, is able to significantly improve the performance (both worst-case and actual error with validation data) of the SEM estimation approach without increasing excessively the complexity of the optimization problem.
\section{Conclusions}
\label{s:conclusions}
We presented new methods to learn one-step-ahead prediction models that provide guaranteed and minimal simulation error bounds. We resorted to the Set Membership identification framework to evaluate and optimize the worst-case simulation error, and presented new results pertaining to the estimation of noise bound, system order, and decay rate. These estimates are then employed to enforce a converging behavior also to the identified model. Finally, we proposed two possible methods to identify the model, and compared them with standard PEM and SEM approaches by means of numerical simulations. The main outcome of the presented work is that the new approaches are able to improve over standard SEM methods, in terms of both guaranteed error bounds and actual accuracy with validation data. In one of the proposed approaches, this comes with minor additional computational complexity. Future work will be devoted to prove additional theoretical properties of the proposed identification approach.
\appendix
\small
\subsection*{Proof of Theorem \ref{th:lambda_d}}
From \eqref{eq:output_def}, \eqref{eq:real_sys_ARX_model} and \eqref{eq:set_regr_noise_o}, we have that:
$$y_p=\psi_p^T \theta_p^0 + d = \left( \varphi_p^T - \Delta_p^T\right) \theta_p^0+d,$$
where $\Delta_p \in \mathbb{D}_p$, and $\mathbb{D}_p$ is defined in \eqref{eq:set_noise:o}. Then, \eqref{eq:lambda_per_proof_first_def} becomes:  
\begin{equation}
\label{eq:lambda_dis_in_proof}
\lambda_p = \min_{\theta_p \in \Omega} \; \max_{ \left[ \begin{smallmatrix} \varphi_p \\ y_p \end{smallmatrix} \right] \in \mathscr{V}_p} \left( \left\vert \varphi_p^T ( \theta_p^0 - \theta_p ) - \Delta_p^T \theta_p^0 + d \right\vert -\bar{d} \right).
\end{equation}
\subsection*{Proof of claim 1)}
\noindent Since $\bar{d} \geq 0$, we have that:
$$
\max_{ \left[ \begin{smallmatrix} \varphi_p \\ y_p \end{smallmatrix} \right] \in \mathscr{V}_p} \left( \left\vert \varphi_p^T ( \theta_p^0 - \theta_p ) - \Delta_p^T \theta_p^0 + d \right\vert -\bar{d} \right)  = \max_{ \left[ \begin{smallmatrix} \varphi_p \\ y_p \end{smallmatrix} \right] \in \mathscr{V}_p}  \left\vert \varphi_p^T ( \theta_p^0 - \theta_p ) - \Delta_p^T \theta_p^0 + d \right\vert -\bar{d} 
$$
Let us define $\Sigma_p=\varphi_p^T ( \theta_p^0 - \theta_p ) - \Delta_p^T \theta_p^0 + d$ for the sake of compactness; it is then possible to split $\left\vert \Sigma_p \right\vert$ into two terms:
$$
\lambda_p = \min_{\theta_p \in \Omega} \; \begin{cases} \underset{\left[ \begin{smallmatrix} \varphi_p \\ y_p \end{smallmatrix} \right] \in \mathscr{V}_p}{\textrm{max}}  \left( \Sigma_p \right) -\bar{d}, \; \text{if} \; \Sigma_p \geq 0 \\
\underset{\left[ \begin{smallmatrix} \varphi_p \\ y_p \end{smallmatrix} \right] \in \mathscr{V}_p}{- \textrm{min}} \left(  \Sigma_p \right) -\bar{d}, \; \text{if} \; \Sigma_p < 0
\end{cases} 
$$
Inside the set $\mathscr{V}_p$, it is always possible to find at least an occurrence of $\underline{\varphi}_p$ and $\underline{y}_p$ such that:
$$
\lambda_p=\min_{\theta_p \in \Omega} \left\vert \underline{\varphi}_p^T (\theta_p^0 - \theta_p) - \underline{\Delta}_p^T \theta_p^0 + \bar{d}_0 \right\vert - \bar{d}
$$
where $\left\vert \underline{\Delta}_p \right\vert =\left[\bar{d}_0, \, \cdots, \, \bar{d}_0, \, 0, \, \cdots, \, 0 \right]^T$. Then
$$
\begin{cases}
\underset{\left[ \begin{smallmatrix} \varphi_p \\ y_p \end{smallmatrix} \right] \in \mathscr{V}_p}{\textrm{max}} (\Sigma_p)= \underline{\varphi}_p^T ( \theta_p^0 - \theta_p ) + \bar{d}_0 \left\Vert \theta_{p,z}^0 \right\Vert_1 + \bar{d}_0, \; \text{if} \; \Sigma_p \geq 0 \\
\underset{\left[ \begin{smallmatrix} \varphi_p \\ y_p \end{smallmatrix} \right] \in \mathscr{V}_p}{\textrm{min}} (\Sigma_p)= \underline{\varphi}_p^T ( \theta_p^0 - \theta_p ) - \bar{d}_0 \left\Vert \theta_{p,z}^0 \right\Vert_1 - \bar{d}_0, \; \text{if} \; \Sigma_p < 0
\end{cases} 
$$
with $\underline{\varphi}_p^T ( \theta_p^0 - \theta_p )\geq 0$ if $\Sigma_p \geq 0$, and $\underline{\varphi}_p^T ( \theta_p^0 - \theta_p )\leq 0$ if $\Sigma_p < 0$. Then, under Assumption \ref{as:model_order}, the only optimal choice of $\theta_p$ that minimizes the resulting $\lambda_p$ is such that: 
$$\underline{\varphi}_p^T (\theta_p^0-\theta_p)=0.$$
Thus, for $\underline{\varphi}_p$, $\underline{y}_p$, $\underline{\Delta}_p$, and the corresponding optimal choice of $\theta_p$, we have: 
\begin{equation}
\label{eq:resulting_lambda_in_proof}
\lambda_p = \bar{d}_0 \left\Vert \theta_{p,z}^0 \right\Vert_1 + \bar{d}_0 - \bar{d}.
\end{equation}
Here $\bar{d}_0 \left\Vert \theta_{p,z}^0 \right\Vert_1$ represents an upper bound of the free response of the system to an initial condition given by $\underline{\Delta}_p$. For an asymptotically stable system this bound converges exponentially to zero with decay rate $\rho$, see \eqref{eq:real_decay_rate}; thus, it holds that: 
\begin{equation}
\label{eq:vanishing_Deltap}
\bar{d}_0 \left\Vert \theta_{p,z}^0 \right\Vert_1 \xrightarrow{p\to\infty} 0.
\end{equation}
Therefore, from \eqref{eq:resulting_lambda_in_proof} and \eqref{eq:vanishing_Deltap}, it follows that: 
$$\lambda_p \xrightarrow{p\to\infty} (\bar{d}_0 - \bar{d}).$$
\subsection*{Proof of claim 2)}
\noindent Let us define $\lambda_p'$ as the solution of \eqref{eq:lambda_dis_in_proof} corresponding to the previously defined $\underline{\varphi}_p$, $\underline{y}_p$, $\underline{\Delta}_p$, and $\underline{\lambda}_p'$ as the solution of \eqref{eq:lambda_est_def} corresponding to the values of $[\tilde{\varphi}_p^T \; \tilde{y}_p]^T \in \tilde{\mathscr{V}}_p^N$ under which \eqref{eq:lambda_est_def} holds with the equality. Since $\tilde{\mathscr{V}}_p^N \subset \mathscr{V}_p$, it follows that $\underline{\lambda}_p' \leq \lambda_p'$.
\subsection*{Proof of claim 3)}
\noindent Let us define
$$
\left[ \begin{smallmatrix} \bar{\varphi}_p^N \\ \bar{y}_p^N \end{smallmatrix} \right] = \text{arg} \min_{\left[ \begin{smallmatrix} \tilde{\varphi}_p \\ \tilde{y}_p \end{smallmatrix} \right] \in \tilde{\mathscr{V}}_p^N} \left\Vert \left[ \begin{smallmatrix} \underline{\varphi}_p \\ \underline{y}_p \end{smallmatrix} \right] - \left[ \begin{smallmatrix} \tilde{\varphi}_p \\ \tilde{y}_p \end{smallmatrix} \right] \right\Vert_2
$$
It follows from \eqref{eq:lambda_est_def} that
\begin{equation}
\label{eq:lambda_proof3}
\underline{\lambda}_p \geq \min_{\theta_p \in \Omega} \left( \left\vert \bar{y}_p^N - \bar{\varphi}_p^{N^T} \theta_p \right\vert - \bar{d} \right)
\end{equation}
Then, adding and subtracting $\underline{y}_p$ and $\underline{\varphi}_p^T \theta_p$ from \eqref{eq:lambda_proof3}, and neglecting the trivial case $\underline{\lambda}_p=0$, leads to:
$$
\begin{aligned}
\underline{\lambda}_p &\geq \min_{\theta_p \in \Omega} \left( \left\vert \underline{y}_p - \underline{\varphi}_p^T \theta_p - (-\bar{y}_p^N + \underline{y}_p ) + (\underline{\varphi}_p - \bar{\varphi}_p^N)^T \theta_p \right\vert -\bar{d} \right) \\
&\geq \min_{\theta_p \in \Omega} \left( \left\vert \underline{y}_p - \underline{\varphi}_p^T \theta_p \right\vert \right) - \max_{\theta_p \in \Omega}\left( \left\vert -\bar{y}_p^N + \underline{y}_p + (-\underline{\varphi}_p + \bar{\varphi}_p^N)^T \theta_p \right\vert +\bar{d} \right) \\
&= \lambda_p + \bar{d} - \max_{\theta_p \in \Omega}\left( \left\vert -\bar{y}_p^N + \underline{y}_p + (-\underline{\varphi}_p + \bar{\varphi}_p^N)^T \theta_p \right\vert \right) - \bar{d}
\end{aligned}
$$ 
Therefore:
$$
\underline{\lambda}_p \geq \lambda_p - \max_{\theta_p \in \Omega}\left( \left\vert -\bar{y}_p^N + \underline{y}_p + (-\underline{\varphi}_p + \bar{\varphi}_p^N)^T \theta_p \right\vert \right).
$$
Under Assumption \ref{as:set_distance}, we have that:
$$
\forall \beta > 0, \; \exists N < \infty : \left\Vert \bar{\varphi}_p^N - \underline{\varphi}_p \right\Vert_2 \leq \beta, \; \left\vert \bar{y}_p^N - \underline{y}_p \right\vert \leq \beta.
$$
Since $|a^T b| \leq \left\Vert a \right\Vert_2 \left\Vert b \right\Vert_2$, we have:
$$
\begin{aligned}
\underline{\lambda}_p &\geq \lambda_p - \max_{\theta_p \in \Omega} \left( \left\vert \bar{y}_p^N - \underline{y}_p \right\vert + \left\Vert \underline{\varphi}_p - \bar{\varphi}_p^N \right\Vert_2 \cdot \left\Vert \theta_p \right\Vert_2 \right) \\
&\geq \lambda_p - \beta \left(1+ \max_{\theta_p\in \Theta_p^0} \left\Vert \theta_p \right\Vert_2 \right)
\end{aligned}
$$
Where $\Theta_p^0$ is the set of parameter such that:
$$ \Theta_p^0 = \left\{ \theta_p^0 : \theta_p^0 = \arg \min_{\theta_p \in \Omega} \lambda_p(\theta_p) \right\}. $$
Then, claim 3) of Theorem \ref{th:lambda_d} is verified by choosing 
$$ \beta \leq \frac{\eta}{\left(1 + \underset{\theta_p\in \Theta_p^0}{\textrm{max}} \left\Vert \theta_p \right\Vert_2 \right)}. $$

\subsection*{Proof of Corollary \ref{th:Delta_theta}}
A straightforward consequence of Theorem \ref{th:lambda_d} is that, if $\bar{d}=\bar{d}_0$, then $\lambda_p \xrightarrow{p \to \infty} 0$. In addition, following the procedure adopted for the proof of claim 1) of Theorem \ref{th:lambda_d}, we can say that it is always possible to find at least an occurrence of $\underline{\varphi}_p$ and $\underline{y}_p$ inside the set $\mathscr{V}_p$, such that \eqref{eq:lambda_dis_in_proof} reduces to: 
\begin{equation}
\label{eq:resulting_lambda_in_proof2}
\lambda_p =  \bar{d}_0 \left\Vert \theta_{p,z}^0 \right\Vert_1.
\end{equation}
From \eqref{eq:real_decay_rate} it follows that $\left\vert \theta_{p,z}^{0,(i)} \right\vert \leq L_z \rho^{p+1}, \; i=1,\hdots,n-1$. Thus, \eqref{eq:resulting_lambda_in_proof2} becomes:
\begin{equation}
\label{eq:resulting_lambda_in_proof3}
\lambda_p =  \bar{d}_0 \left\Vert \theta_{p,z}^0 \right\Vert_1 \leq n \, \bar{d}_0 L_z \rho^{p+1}.
\end{equation}
\normalsize

\bibliographystyle{IEEEtran}

\end{document}